\documentclass[12pt,leqno]{amsart}
\usepackage{amsmath,amsfonts,amssymb,amsthm}
\usepackage{epsfig}


\theoremstyle{plain}
\newtheorem{theorem}{Theorem}
\newtheorem{proposition}[theorem]{Proposition}

\newtheorem{lemma}[theorem]{Lemma}

\theoremstyle{definition}
\newtheorem{definition}[theorem]{Definition}

\newcommand{\ve}{\varepsilon}

\newcommand{\boldM}{{\mathbf M}}

\newcommand{\cL}{{\mathcal L}}

\newcommand{\Hn}{\mathbb H^n}

\newcommand{\field}[1]{\Bbb{#1}}
\newcommand{\G}{\field{G}}                  
\newcommand{\R}{\field{R}}                  
\newcommand{\g}{\mathfrak g}

\def\Barint_#1{\mathchoice
          {\mathop{\vrule width 6pt height 3 pt depth -2.5pt
                  \kern -8pt \intop}\nolimits_{#1}}%
          {\mathop{\vrule width 5pt height 3 pt depth -2.6pt
                  \kern -6pt \intop}\nolimits_{#1}}%
          {\mathop{\vrule width 5pt height 3 pt depth -2.6pt
                  \kern -6pt \intop}\nolimits_{#1}}%
          {\mathop{\vrule width 5pt height 3 pt depth -2.6pt
                  \kern -6pt \intop}\nolimits_{#1}}}

\begin{document}

\title[Riesz potentials and $p$-superharmonic functions, etc.]
{Riesz potentials and $p$-superharmonic functions in Lie groups of Heisenberg type}

\author{Nicola Garofalo}
\address{Department of Mathematics\\Purdue University \\
West Lafayette, IN 47907} \email[N. Garofalo]{garofalo@math.purdue.edu}
\thanks{First author supported in part by NSF Grant DMS-07010001}

\author{Jeremy T. Tyson}
\address{Department of Mathematics \\
University of Illinois at Urbana-Cham\-paign\\
Urbana, IL 61801} \email[J. T. Tyson]{tyson@math.uiuc.edu}
\thanks{Second author supported in part by NSF Grant DMS-0901620}

\maketitle

\begin{abstract}
We prove a superposition principle for Riesz potentials of nonnegative
continuous functions on Lie groups of Heis\-enberg type. More precisely, we
show that the Riesz potential
$$
R_\alpha(\rho)(g) = \int_{\G} N(g^{-1} g')^{\alpha-Q} \rho(g') dg', \qquad 0<\alpha<Q,
$$
of a nonnegative function $\rho\in C_0(\G)$ on a group $\G$ of
Heisenberg type is necessarily either $p$-subharmonic or $p$-superhar\-monic,
depending on $p$ and $\alpha$. Here $N$ denotes the non-isotropic
homogeneous norm on such groups, as introduced by Kaplan.
This result extends to a wide class of nonabelian stratified Lie groups a
recent remarkable superposition result of Lindqvist and Manfredi.
\end{abstract}

\section{Introduction}\label{S:intro}

The study of Riesz potentials  occupies a central position in classical
analysis and potential theory, see  \cite{S1} and \cite{L}. A basic result
states that if one considers the Newtonian potential of $\rho\in C_0(\R^n)$,
$n\ge 3$,
\[
I_2(\rho)(x) = c_n \int_{\R^n} \frac{\rho(y)}{|x-y|^{n-2}} dy,
\]
then $-\Delta(I_2(\rho)) =  \rho$, in other words the operator $I_2$ is the
inverse of minus the Laplacian. In particular, when $\rho \ge 0$, then
$I_2(\rho)$ is a nonnegative superharmonic function in $\R^n$. Vice versa, the
F. Riesz decomposition theorem states that every nonnegative superharmonic
function in $\R^n$ arises---modulo harmonic functions---as the Newtonian
potential of a measure, see \cite{He}. More in general, the Riesz potential
\[
I_\alpha(\rho)(x) = c_{n,\alpha} \int_{\R^n}  \frac{\rho(y)}{|x-y|^{n-\alpha}}
dy,\ \ \ 0<\alpha<n,
\]
provides an inverse for the fractional power of the Laplacian. One has in fact
the following well-known identities in $\mathcal S'(\R^n)$, see \cite{S1}:
\[
(-\Delta)^{\alpha/2}(I_\alpha(\rho)) =  I_\alpha((-\Delta)^{\alpha/2}\rho) = \rho.
\]

What is remarkable is that the Riesz potentials, which as we have seen are
intrinsically connected to a linear operator, the Laplacian, also interact
with a highly nonlinear operator, namely, the $p$-Laplacian
\[
\Delta_p f = \text{div}(|\nabla f|^{p-2} \nabla f),\ \ \ \ \ \ \  1<p <\infty.
\]
This was discovered by Lindquist and Manfredi in \cite{LM}, where the authors
proved the following surprising result.

\begin{theorem}\label{T:LM}
Let $\rho\in C_0(\R^n)$, $\rho \ge 0$. The following three cases hold:
\begin{itemize}
\item[1)] If $2<p<n$, then $I_{n-\alpha}(\rho)$ is $p$-superharmonic when 
\[
0<\alpha \le \frac{n-p}{p-1}.
\]
\item[2)] If $p>n$, then $I_{n-\alpha}(\rho)$ is $p$-subharmonic when 
\[
-\alpha \ge \frac{p-n}{p-1}.
\]
If $p=\infty$, one may take $-\alpha \ge 1.$
\item[3)] If $p = n$, then the function
\[
I_n(x) = \int_{\R^n} \rho(y) \log |x-y| dy
\]
is $n$-subharmonic.
\end{itemize}
\end{theorem}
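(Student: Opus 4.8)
The plan is to reduce each of the three assertions to a single pointwise differential inequality for a smooth approximation and then pass to the limit. Fix a case and write the potential, up to a positive multiplicative constant $c$ which we absorb, as $u=K*\rho$, where $K(z)=|z|^{-\alpha}$ (with $\alpha>0$ in case~1, $\alpha<0$ in case~2) and $K(z)=\log|z|$ in case~3. Since $\rho\in C_0(\R^n)$ is nonnegative with compact support, the mollifications $\rho_\eps=\rho*\vp_\eps$ are nonnegative, supported in a fixed compact set, and converge uniformly to $\rho$; put $u_\eps=K*\rho_\eps$. In every case $K\in L^1_{\loc}(\R^n)$, so $u_\eps\in C^\infty$; moreover the first-order kernel $\nabla K$ is again in $L^1_{\loc}$ and $\nabla u=(\nabla K)*\rho$ is a continuous Riesz-type potential of $\rho$, so $u$ and $u_\eps$ are locally Lipschitz and $u_\eps\to u$, $\nabla u_\eps\to\nabla u$ locally uniformly. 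It therefore suffices to show that each smooth $u_\eps$ is a classical supersolution of the relevant operator (a subsolution in cases~2 and~3): for finite $p$ this makes $u_\eps$ a continuous weak, hence $p$-superharmonic, function, and for $p=\infty$ one argues in the viscosity framework directly; these notions are stable under locally uniform limits, so the (continuous) limit $u$ inherits the property.

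The heart of the matter is the pointwise computation, which is where the superposition ``miracle'' lives. At a point $x_0$ with $\nabla u_\eps(x_0)\neq 0$, writing $\x=\nabla u_\eps(x_0)/|\nabla u_\eps(x_0)|$,
\[
\frac{\Delta_p u_\eps(x_0)}{|\nabla u_\eps(x_0)|^{p-2}}=\Delta u_\eps(x_0)+(p-2)\,\big\langle D^2 u_\eps(x_0)\x,\x\big\rangle ,
\]
and the right-hand side is \emph{linear and second order} in $u_\eps$, hence commutes with the integral $u_\eps(x_0)=\int K(x_0-y)\rho_\eps(y)\,dy$. Using $\Delta(|z|^{-\alpha})=\alpha(\alpha+2-n)|z|^{-\alpha-2}$ and $\langle D^2(|z|^{-\alpha})\x,\x\rangle=\alpha|z|^{-\alpha-2}\big(-1+(\alpha+2)\langle z,\x\rangle^2/|z|^2\big)$, one gets in cases~1 and~2
\[
\frac{\Delta_p u_\eps(x_0)}{|\nabla u_\eps(x_0)|^{p-2}}=c\,\alpha\int_{\R^n}|x_0-y|^{-\alpha-2}\,b\big(t(y)\big)\,\rho_\eps(y)\,dy,\qquad t(y)=\frac{\langle x_0-y,\x\rangle^2}{|x_0-y|^2}\in[0,1],
\]
with $b(t)=(\alpha+4-n-p)+(p-2)(\alpha+2)\,t$ \emph{affine} in $t$. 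As $\rho_\eps\ge 0$, the sign of the integral is controlled by the sign of $b$ on $[0,1]$, i.e.\ by an endpoint value, and $b(1)=(p-1)\alpha-(n-p)$. In case~1 ($\alpha>0$, $p>2$), $c\alpha>0$ and $b$ is increasing, so $b\le b(1)\le 0$ exactly when $\alpha\le(n-p)/(p-1)$, giving $\Delta_p u_\eps\le 0$. In case~2 ($\alpha<0$, $p>n$), $c\alpha<0$ and a short case distinction on the sign of $\alpha+2$ shows $b\le 0$ on $[0,1]$ precisely when $-\alpha\ge(p-n)/(p-1)$, giving $\Delta_p u_\eps\ge 0$; replacing $\Delta u_\eps$ by $\Delta_\infty u_\eps=\langle D^2 u_\eps\nabla u_\eps,\nabla u_\eps\rangle$ in the same computation produces the threshold $-\alpha\ge 1$ for $p=\infty$. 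In case~3, $\Delta(\log|z|)=(n-2)|z|^{-2}$ and $\langle D^2(\log|z|)\x,\x\rangle=|z|^{-2}(1-2t)$ combine to
\[
\frac{\Delta_n u_\eps(x_0)}{|\nabla u_\eps(x_0)|^{n-2}}=2(n-2)\int_{\R^n}|x_0-y|^{-2}\big(1-t(y)\big)\,\rho_\eps(y)\,dy\ \ge\ 0 ,
\]
so $I_n(\rho)$ is $n$-subharmonic.

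Two routine but essential points finish the argument. First, the critical set: since $p>2$ in every case, $\x\mapsto|\x|^{p-2}\x$ is $C^1$, so $\Delta_p u_\eps$ extends continuously by $0$ across $\{\nabla u_\eps=0\}$ and the displayed sign holds on all of $\R^n$; correspondingly, for $p=\infty$ the viscosity test-inequality at a contact point with vanishing test gradient is vacuous. Thus each $u_\eps$ is a genuine classical supersolution (resp.\ subsolution). Second, one invokes the stability of $p$-super/subharmonic functions under locally uniform convergence, together with the fact that a continuous weak supersolution is $p$-superharmonic, to descend from $u_\eps$ to $u$.

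I expect the two real obstacles to be: (a) justifying differentiation twice under the integral sign defining $u_\eps$, which is exactly where the hypothesis $p>2$ enters quantitatively, since $\alpha\le(n-p)/(p-1)<n-2$ keeps the second-order kernel $D^2K$ in $L^1_{\loc}$ so that $D^2 u_\eps$ is a bona fide continuous integral; and (b) pinning down the precise closure statement for $p$-super(sub)harmonic functions under locally uniform limits in the generality required (all $p\in(2,\infty]$ at once, the limit known only to be locally Lipschitz). Everything else is the bookkeeping behind the affine-in-$t$ estimate, whose endpoint value $(p-1)\alpha-(n-p)$ reproduces the stated thresholds on the nose.
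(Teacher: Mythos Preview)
Your argument is correct and is, in essence, the Lindqvist--Manfredi computation that the paper cites from \cite{LM} rather than reproves. The paper does not contain an independent proof of Theorem~\ref{T:LM}; its own contribution is Theorem~\ref{T:main}, whose proof, when one formally sets $k=0$, $Q=n$, $J\equiv 0$, collapses to exactly the Euclidean calculation you outline.

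It is worth recording one organizational difference. You normalize first, writing
\[
|\nabla u|^{2-p}\Delta_p u = \Delta u + (p-2)\langle D^2u\,\xi,\xi\rangle,
\]
observe that for fixed $\xi$ the right side is linear in $u$, and pass the operator under a single integral in $y$; the pointwise sign then reduces to the affine function $b(t)$ on $[0,1]$ with $t=\langle x_0-y,\xi\rangle^2/|x_0-y|^2$. The paper instead keeps the unnormalized cubic form $|XF|^{2}\cL F + (p-2)\cL_\infty F$, expands it as a triple integral, and only afterwards collapses two of the three integrations by introducing $\mathcal K = \int \rho\,N^{q-4}A$, which is nothing but (a multiple of) $\nabla u$. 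In the Euclidean case the two routes coincide: your parameter $t$ is exactly $\langle A_1,\mathcal K\rangle^2/(|A_1|^2|\mathcal K|^2)$, and your endpoint condition $b(1)=(p-1)\alpha-(n-p)\le 0$ is the paper's Cauchy--Schwarz threshold $(Q+p+q-4)+(q-2)(p-2)\ge 0$ with $q=-\alpha$, $Q=n$. Your packaging is a bit slicker for $\R^n$; the triple-integral form is what generalizes cleanly to $H$-type groups, where the extra nonnegative terms in \eqref{M4} (coming from the symplectic structure) have no Euclidean counterpart.

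Your two flagged obstacles are handled correctly: $\alpha<(n-p)/(p-1)<n-2$ keeps $D^2K\in L^1_{\loc}$ so that $u_\eps\in C^\infty$ with derivatives given by convolution, and the passage to the limit goes through since $\nabla u_\eps\to\nabla u$ locally uniformly lets one pass to the limit in the weak formulation $\int|\nabla u_\eps|^{p-2}\langle\nabla u_\eps,\nabla\phi\rangle\ge 0$.
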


It is remarkable here that the threshold $\frac{n-p}{p-1}$ is connected with
the fundamental solution of the nonlinear operator $\Delta_p$ which, when
$p\not= n$, is given by a multiple of the singular function
$|x-y|^{-\frac{n-p}{p-1}}$. Theorem \ref{T:LM} was inspired by a recent paper
of Crandall and Zhang \cite{CZ} in which the authors establish a superposition
principle stating that, for instance, in the range $2<p<n$ sums like 
\[
\sum \frac{A_j}{|x-a_j|^{\frac{n-p}{p-1}}},\ \ \ \ \ A_j\ge 0,
\]
are $p$-superharmonic in $\R^n$.

In this paper, we prove a superposition principle akin to Theorem \ref{T:LM},
but for a class of Riesz potentials which are naturally associated with a
given sub-Laplacian on a Lie group of Heisenberg type. We show that, quite
surprisingly, such Riesz potentials are intertwined with a nonlinear operator
which generalizes the $p$-Laplacian. Given the intricate geometry associated
with the complex structure of such groups, it is quite remarkable that, with a
substantial amount of additional technical complication, the Euclidean proof
in \cite{LM} extends virtually unchanged to this significantly wider class of 
ambients. This fact is a testament to the remarkable structure of such Lie
groups, and their extensive symmetry. 
  
To state our main result we recall that a Carnot group of step two is a
connected, simply connected Lie group $\G$ whose Lie algebra admits a
decomposition $\g = V_1\oplus V_2$ such that $[V_1,V_1] = V_2$ and $[V_1,V_2]
= \{0\}$. We assume that $\g$ has been endowed with an inner product
$\langle\cdot,\cdot\rangle$ with respect to which the two layers $V_1$ and
$V_2$ are orthogonal. Let $m$ be the dimension of $V_1$ and $k$ be the
dimension of $V_2$, and denote with $\{e_1,\ldots,e_m\}$ and
$\{\ve_1,\ldots,\ve_k\}$ orthonormal basis of $V_1$ and $V_2$ with respect to
$\langle\cdot,\cdot\rangle$.

The left-translation operator  $L_g:\G\to \G$ is defined by $L_g(g') = g
g'$. Its differential will be indicated by $dL_g$. Using the above orthonormal
basis we introduce left-invariant vector fields on $\G$ by letting 
\[
X_i(g) = dL_g(e_i),\ \ \  \ \ i=1,\ldots,m,
\]
\[
T_s(g) = dL_g(\ve_s),\ \ \ \ \ s=1,\ldots,k.
\]
Hereafter, we
agree that $\G$ is endowed with a left-invariant Riemannian metric with
respect  to which the vector fields $X_1,\ldots,X_m,T_1,\ldots,T_k$ constitute
an orthonormal basis.

The horizontal Laplacian (also known as sub-Laplacian) associated with the basis $\{e_1,\ldots,e_m\}$ is given by
\begin{equation}\label{L}
\mathcal L = \sum_{i=1}^m X_i^2.
\end{equation}
Observe that \eqref{L} fails to be elliptic at every point and that it is quite different from the (Riemannian) Laplace-Beltrami operator on $\G$, which is instead given by
\[
\Delta = \sum_{i=1}^m X_i^2 + \sum_{s=1}^k T_s^2.
\]
However, thanks to the grading assumption $[V_1,V_1] = V_2$, the vector fields $X_1,\ldots,X_m$, together with their commutators, generate the whole tangent bundle of $\G$, and therefore by H\"ormander's theorem \cite{H} the second order partial differential operator $\mathcal L$ is hypoelliptic.

Since the exponential mapping $\exp : \g \to \G$ is a surjective
diffeomorphism, we can define a global system of coordinates on $\G$ as
follows. Consider the analytic mappings $z:\G\to V_1$, $t:\G\to V_2$ defined
through the equation $g = \exp(z(g)+t(g))$. For each $i=1,\ldots,m$ we set
\[
z_i = z_i(g) = \langle z(g),e_i\rangle,
\]
whereas for $s=1,\ldots,k$ we let
\[
t_s = t_s(g) = \langle t(g),\ve_s\rangle.
\]
Henceforth, we will routinely omit the reference to the point $g\in \G$, and simply identify $g\in \G$ with its logarithmic coordinates $\exp^{-1}(g) = z(g) + t(g)$, where $z(g) = z_1(g)e_1+...+z_m(g) e_m$, $t(g) = t_1(g)\ve_1+...+t_k(g)\ve_k(g)$. To simplify notation, we will also routinely identity $z\in V_1$ with the vector $(z_1,...,z_m)\in \R^m$, and $t\in V_2$ with $(t_1,...,t_k)$, and we will write for $g\in \G$,
\[
g = (z,t) = (z_1,\ldots,z_m,t_1,\ldots,t_k).
\]

The number 
\begin{equation}\label{Q}
Q = m + 2k,
\end{equation}
associated with the nonisotropic dilations $\Delta_\lambda(z + t) = \lambda z
+ \lambda^2 t$ on $\g$, is called the homogeneous dimension of the group
$\G$. It plays the role of a dimension in the analysis of $\G$. In fact, if we
denote by $d\xi$ the standard $(m+k)$-dimensional Lebesgue measure on $\g$,
then one has $d(\Delta_\lambda(\xi)) = \lambda^Q d\xi$. Since a bi-invariant
Haar measure on $\G$ is obtained by pushing forward $d\xi$ via the exponential
mapping, such change of variable formula continues to be valid on $\G$ for the
natural nonisotropic dilations defined by the formula
\[
\delta_\lambda(g) = \exp\circ \Delta_\lambda\circ \exp^{-1} (g).
\]
We will indicate with $dg$ the Haar measure on $\G$. We thus have
\[
d(\delta_\lambda(g)) = \lambda^Q dg.
\]

We now consider the map
\[
J:V_2 \to \text{End}(V_1),
\]
which is uniquely specified by the identity 
\[
\langle J(t)z,z'\rangle = \langle [z,z'],t \rangle,\ \ \ \ z, z'\in V_1, t\in
V_2.
\]

\begin{definition}\label{D:Htype}
A Carnot group of step two $\G$ is called of \emph{Heisenberg type}, or of $H$-type, if for every 
$t\in V_2$ such that $|t| = 1$, the mapping $J$ is an orthogonal transformation of $V_1$ onto itself.
\end{definition}
From Definition \ref{D:Htype} we immediately see that, if $\G$ is of $H$-type, then
\begin{equation}\label{ht}
|J(t)z| = |z| |t|,\ \ \ \ \ \ z\in V_1, t\in V_2.
\end{equation}
From \eqref{ht}, and polarization, we obtain the important identity
\begin{equation}\label{ht2}
\langle J(t)z,J(t')z \rangle = \langle t,t' \rangle |z|^2,\ \ \ \  \ z\in V_1,
t, t'\in V_2.
\end{equation}

Groups of Heisenberg type were introduced by A. Kaplan in \cite{K} in
connection with questions of hypoellipticity of sub-Laplacians. They
constitute a natural generalization of the Heisenberg group $\Hn$, which (up
to group isomorphisms) one obtains from Definition \ref{D:Htype} when the
center $V_2$ is one-dimensional. For more information on the Heisenberg group
$\Hn$ we refer the reader to \cite{S2}. As it turns out, there is in nature a
plentiful supply of $H$-type groups since, for instance, they arise in the
Iwasawa decomposition of simple groups of rank one, see \cite{CDKR}.
  
The Folland--Kaplan gauge on $\G$ is defined by
\begin{equation}\label{N}
N(g) = (|z|^4 + 16 |t|^2)^{\frac{1}{4}}.
\end{equation}
We note explicitly that $N(g) = N(g^{-1})$. Although such function can be
defined in every Carnot group of step two, it has a special significance if the
group is of $H$-type. In such case, a remarkable discovery of Folland (for the
Heisenberg group $\Hn$) \cite{F1}, and Kaplan (for any group of Heisenberg
type) \cite{K}, shows that the fundamental solution of the sub-Laplacian
\eqref{L} is given by
\begin{equation}\label{gamma}
\Gamma(g,g') = - c(\G) N(g^{-1} g')^{2-Q},
\end{equation}
where $c(\G)>0$ is a suitable constant.

We now introduce the \emph{Riesz potential of order $\alpha$} on $\G$. Given $\rho\in C_0(\G)$, we set:
\begin{equation}\label{R}
R_\alpha(\rho)(g) = \int_{\G} \frac{\rho(g')}{N(g^{-1} g')^{Q-\alpha}} dg',\ \ \ \ 0<\alpha<Q.
\end{equation}
In view of \eqref{gamma}, we see that, up to a negative constant, $R_2(\rho)$
coincides with $\Gamma \star \rho$, where we have indicated with $\star$ the
group convolution in $\G$. But then, the potential identity $\mathcal
L(\Gamma \star \rho) =  \rho$, established in \cite{F2} for any Carnot group,
allows to conclude that  
\[
- \mathcal L(R_2(\rho)) = \rho.
\]
Hence, in a group of Heisenberg type $\G$ the Riesz potential $R_2$ plays the
same role of the classical Newtonian potential $I_2$ in $\R^n$.

These considerations led us to the question whether, in such groups, the
``linear'' objects $R_\alpha$ could  in any way be intertwined with a natural
nonlinear operator on $\G$ which has received considerable attention over the
past decade. Such important operator presents itself in the Euler-Lagrange
equation of the energy functional
\[
E_p(\phi) = \frac{1}{p} \int_\G |X\phi|^p dg,\ \ \ \ \ 1<p<\infty,
\]
in the Folland--Stein Sobolev embedding, see \cite{F2}. Here, we have indicated
with $X\phi = \sum_{i=1}^m X_i \phi X_i$ the subgradient of a function $\phi$,
and by $|X\phi| = (\sum_{i=1}^m (X_i \phi)^2)^{1/2}$ its length.  An elementary calculation shows that the
first variation of the energy $E_p$ leads  to the \emph{horizontal
  $p$-Laplacian}, which is defined on $\G$ via its action on a smooth function
$\phi$ by
\begin{equation}\label{Lp}
\mathcal L_{p} \phi = \sum_{i=1}^m X_i(|X \phi|^{p-2} X_i \phi),\ \ \ \ \ \ 1<p<\infty.
\end{equation}
Super-(sub-)solutions of the operator $\mathcal L_p$ must be defined in the
weak sense, see \cite{CDG1}, but at least for the range $p\ge 2$ of interest
in this paper, when the function $\phi$ is sufficiently smooth, then one can
see that $\phi$ is $p$-superharmonic ($p$-subharmonic) if $\mathcal L_p \phi
\le 0$ ($\mathcal L_p \phi\ge 0$).  

The answer to the above question is contained in the following theorem, which
is the main result of the present paper. 

\begin{theorem}\label{T:main}
Let $\G$ be a group of Heisenberg type with homogeneous dimension $Q$ given by \eqref{Q}. For a given $\rho\in C_0(\G)$, $\rho \ge 0$, the following three cases hold:
\begin{itemize}
\item[1)] If $2<p<Q$, then $R_{Q-\alpha}(\rho)$ is $p$-superharmonic when 
\[
0<\alpha \le \frac{Q-p}{p-1}.
\]
\item[2)] If $p>Q$, then $R_{Q-\alpha}(\rho)$ is $p$-subharmonic when 
\[
-\alpha \ge \frac{p-Q}{p-1}.
\]
If $p=\infty$, one may take $-\alpha \ge 1.$
\item[3)] If $p = Q$, then the function
\[
R_Q(\rho)(g) = \int_{\G} \rho(g') \log N(g^{-1} g') dg'.
\]
is $Q$-subharmonic.
\end{itemize}
\end{theorem}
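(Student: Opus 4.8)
\medskip

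The plan is to adapt the Euclidean argument of Lindqvist and Manfredi \cite{LM}: the identity $|\nabla|x||\equiv 1$, used there repeatedly, is replaced by a family of differential identities for the gauge $N$ that follow from the structural relations \eqref{ht}--\eqref{ht2}. I would treat case 1) in detail, the rest being analogous. First I reduce to smooth densities: mollifying $\rho$ by a group convolution against a smooth compactly supported approximate identity produces $\rho_j\in C_0^\infty(\G)$, $\rho_j\ge 0$, $\rho_j\to\rho$ uniformly with supports in a fixed compact set, whence $R_{Q-\al}(\rho_j)\to R_{Q-\al}(\rho)$ locally uniformly; since locally uniform limits of $p$-superharmonic functions retain that property (see \cite{CDG1}), one may assume $\rho\in C_0^\infty(\G)$. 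Writing $u=R_{Q-\al}(\rho)=\rho\star w$ with $w=N^{-\al}$ and $(\rho\star w)(g)=\int_\G\rho(h)w(h^{-1}g)\,dh$ (here $N(g)=N(g^{-1})$ is used), left-invariance of the $X_i$ gives $X_{i_1}\cdots X_{i_\ell}u=\rho\star(X_{i_1}\cdots X_{i_\ell}w)$; as the first two horizontal derivatives of $N^{-\al}$ are locally integrable throughout $0<\al\le\frac{Q-p}{p-1}<Q-2$, the function $u$ is $C^2$ in the horizontal variables and, since $p>2$, $\mathcal L_p u$ is continuous. Thus it suffices to prove $\mathcal L_p u\le 0$ pointwise, which is clear where $Xu=0$ and, where $Xu\ne 0$, follows from
\begin{equation}\label{E:exp}
\mathcal L_p u=|Xu|^{p-4}\Big(|Xu|^2\,\mathcal L u+(p-2)\,\langle(X^2u)\,Xu,Xu\rangle\Big),
\end{equation}
where $(X^2u)_{ij}=\tfrac12(X_iX_j+X_jX_i)u$ is the symmetrized horizontal Hessian --- all that survives contraction against $Xu$.

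The heart of the argument is the following set of identities, obtained by writing the horizontal fields in exponential coordinates as $X_i=\partial_{z_i}-\tfrac12\sum_s\langle[e_i,z],\ve_s\rangle\partial_{t_s}$ and invoking \eqref{ht}--\eqref{ht2}. Put $\om:=|z|^2/N^2\in[0,1]$ (the bound $\om\le 1$ is \eqref{ht}) and $\nu:=XN/|XN|$. Then
\[
|XN|^2=\om,\qquad \mathcal L(N^{\ga})=\ga(\ga+Q-2)\,\om\,N^{\ga-2}\quad(\ga\in\field{R}),
\]
the second specializing to $\mathcal L(N^{2-Q})=0$, and, crucially,
\[
X^2N=\frac{\om}{N}\,M,\qquad M=\Id-3\,\nu\nu^{T}+2\,\Pi_z+2\,\widehat z\,\widehat z^{\,T},
\]
where $\widehat z=z/|z|$ and $\Pi_z$ is the orthogonal projection of $V_1$ onto $W_z:=\spa\{J(\ve_s)z:1\le s\le k\}$, a $k$-dimensional subspace orthogonal to $z$ on which, by \eqref{ht2}, $\{J(\ve_s)z\}$ is an orthogonal frame of norm $|z|$. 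The analog of $\langle D^2|x|\,\widehat x,\widehat x\rangle=0$ is $\langle X^2N\,\nu,\nu\rangle=0$; what is actually used is the companion inequality it comes with, namely that $\nu=\om\,\widehat z+\eta$ with $\eta\in W_z$ and $|\eta|^2=1-\om^2$, so that (Cauchy--Schwarz in $\field{R}^2$) for every horizontal $\xi$
\begin{equation}\label{E:ang}
\langle\nu,\xi\rangle^2\le\langle\widehat z,\xi\rangle^2+|\Pi_z\xi|^2,
\end{equation}
whence also $\langle M\xi,\xi\rangle\ge|\xi-\langle\widehat z,\xi\rangle\widehat z-\Pi_z\xi|^2\ge 0$, i.e.\ $X^2N\ge 0$.

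Now insert these into \eqref{E:exp}. With $Xu=-\al\mathbf K$, $\mathbf K:=\rho\star(N^{-\al-1}XN)$, one has $\mathcal L u=-\al(Q-2-\al)\Theta$, $\Theta:=\rho\star(N^{-\al-2}\om)>0$, and from $X^2w=\al N^{-\al-2}\om\big((\al+1)\nu\nu^{T}-M\big)$ together with $\om\langle\nu,\mathbf K\rangle^2=\langle XN,\mathbf K\rangle^2$ one computes
\[
\langle(X^2u)\mathbf K,\mathbf K\rangle=\al\big[(\al+4)I_1-|\mathbf K|^2\Theta-2I_W-2I_z\big],
\]
with $I_1:=\rho\star(N^{-\al-2}\langle XN,\mathbf K\rangle^2)$, $I_W:=\rho\star(N^{-\al-2}\om|\Pi_z\mathbf K|^2)$, $I_z:=\rho\star(N^{-\al-2}\om\langle\widehat z,\mathbf K\rangle^2)$, all $\ge 0$ (the geometric data $\nu,\widehat z,\Pi_z$ being evaluated at $h^{-1}g$ inside the convolutions). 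Integrating \eqref{E:ang} against $\rho(h)\,dh$ gives $I_1\le I_z+I_W$, while $\widehat z\perp W_z$ gives $I_z+I_W\le|\mathbf K|^2\Theta$. Substituting into \eqref{E:exp},
\[
\mathcal L_p u=|Xu|^{p-4}\al^3\big[(p-2)(\al+4)I_1-(Q+p-4-\al)|\mathbf K|^2\Theta-2(p-2)(I_W+I_z)\big];
\]
bounding $I_1\le I_z+I_W$ in the first term (permissible as $(p-2)(\al+4)>0$) and then $I_z+I_W\le|\mathbf K|^2\Theta$ collapses the bracket to $\big[(p-1)\al-(Q-p)\big]|\mathbf K|^2\Theta\le 0$, precisely when $\al\le\frac{Q-p}{p-1}$; this proves 1). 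Case 2) runs identically with $w=N^{-\al}$ ($-\al>0$): the same substitutions and the same two inequalities, plus an elementary split on whether $-\al\le2$ or $-\al>2$, bound the bracket below by $\big((p-1)(-\al)-(p-Q)\big)|\mathbf K|^2\Theta\ge0$, which is the hypothesis $-\al\ge\frac{p-Q}{p-1}$; case 3) is the same with $w=\log N$ and yields $\ge0$ directly. For $p=\infty$ one uses $\mathcal L_\infty u=\langle(X^2u)Xu,Xu\rangle$; since $-\al\ge1$ makes $X^2(N^{-\al})=(-\al)(-\al-1)N^{-\al-2}(XN)(XN)^{T}+(-\al)N^{-\al-1}X^2N$ a sum of positive semidefinite matrices (recall $X^2N\ge0$), this is $\ge0$.

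The step I expect to be the main obstacle is the Hessian identity $X^2N=\frac\om N M$. Its proof is a substantial computation with the $J$-maps, and it produces, beyond the two Euclidean-type pieces $-3N^{-1}(XN)(XN)^{T}$ and $(\om/N)\Id$, the genuinely new terms $2(\om/N)\Pi_z$ and $2(\om/N)\widehat z\,\widehat z^{T}$ coming from the commutator structure. One must then verify, using precisely \eqref{ht}, \eqref{ht2} and $\om\le1$, that these not only keep $M$ positive semidefinite but also satisfy the sharp inequality \eqref{E:ang}; it is \eqref{E:ang} that lets the negative term $-2(p-2)(I_W+I_z)$ in $\mathcal L_p u$ combine with $(p-2)(\al+4)I_1$ so as to effectively lower its coefficient to $(p-2)(\al+2)$ --- the amount the crude bound $I_1\le|\mathbf K|^2\Theta$ alone falls short by --- and hence leaves the threshold $\frac{Q-p}{p-1}$, the exponent of the fundamental solution of $\mathcal L_p$, exactly as in the Euclidean case.
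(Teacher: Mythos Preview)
Your proposal is correct and follows essentially the same route as the paper: both compute $|Xu|^{4-p}\mathcal L_p u$ by differentiating under the integral, reduce via the vector $\mathbf K$ (the paper's $\mathcal K$), and the decisive pointwise inequality you call \eqref{E:ang} is exactly the nonnegativity of the paper's expression $a_1\langle\mathcal K,z_1\rangle^2+a_1\sum_s\langle\mathcal K,J(\ve_s)z_1\rangle^2-\langle A_1,\mathcal K\rangle^2$. The principal difference is packaging: you write the symmetrized Hessian intrinsically as $X^2N=\tfrac{\om}{N}(\Id-3\nu\nu^T+2\Pi_z+2\widehat z\,\widehat z^{\,T})$ and deduce \eqref{E:ang} in one line from the observation that $\nu$ is a unit vector in the subspace $\spa\{\widehat z\}\oplus W_z$, whereas the paper stays in coordinates (its Lemma on $N_{,ij}$) and proves the same inequality by splitting into two sums $I$ and $II$; your formulation makes the positive semidefiniteness $X^2N\ge 0$ and the $p=\infty$ case more transparent, while the paper's coordinate version avoids having to introduce $\Pi_z$ and $W_z$ explicitly.
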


The reader should notice the striking resemblance between Theorem \ref{T:main}
and its Euclidean predecessor Theorem \ref{T:LM}. Again, a notable aspect here
is the fact that the threshold exponent $\frac{Q-p}{p-1}$ is decided by that
of the singular solutions of $\mathcal L_p$, see Proposition \ref{P:pfs} in
Section \ref{S:s} below.

\

\paragraph{\bf Acknowledgements} Research for this paper was completed while
the authors were visitors in the Intensive Research Period ``Euclidean
Harmonic Analysis, Nilpotent Lie groups and PDE'' at the Centro di Ricerca
Matematica Ennio De Giorgi in April 2010. We are grateful to the De Giorgi
Center for the warm hospitality and the excellent working atmosphere.

\section{Calculus on $H$-type groups}\label{S:calc}

Let $\G$ be a Carnot group of step two with its left-invariant Riemannian
tensor with respect to which the vector fields $X_1,\ldots,X_m$,
$T_1,\ldots,T_k$ are orthonormal at every point. Given a smooth function
$u:\G\to\R$, the subgradient (or horizontal gradient) of $u$ is given by
$$
Xu = \sum_{i=1}^m X_i u X_i.
$$
Using the Baker-Campbell-Hausdorff formula, which for a Carnot group of step two becomes
\[
\exp(z +t) \exp(z' + t') = \exp(z + z' + t + t' + \frac{1}{2}[z,z']),
\]
it is easy to see that in the logarithmic coordinates $(z,t)$ one has
\begin{equation}\label{firstd}
X_i = \partial_{z_i} - \frac{1}{2} \sum_{s = 1}^k \sum_{j = 1}^m b^s_{ij} z_{j} \partial_{t_s},
\end{equation}
where for $i, j=1,\ldots,m, s=1,\ldots,k,$ we have indicated with
\begin{equation}\label{gc}
b^s_{ij} = \langle [e_i,e_j],\ve_s \rangle = \langle J(\ve_s)e_i,e_j\rangle,
\end{equation}
 the group constants.  Notice that for each $s=1,...,k$, the $m\times m$ matrix $\left[b_{ij}^s\right]$ is skew-symmetric, i.e., $b^s_{ii} = 0$, and that $b^s_{ji} = - b^s_{ij}$.
We also introduce another convenient notation
 \begin{equation}\label{B}
 B_{is} = B_{is}(g)  = - \sum_{j=1}^m b^s_{ij} z_j =   \langle
 J(\ve_s)z,e_i\rangle.
 \end{equation}
From \eqref{firstd} it is not difficult to obtain the commutator formula
\begin{equation}\label{commutator}
[X_i,X_{j}]  =  \sum_{s = 1}^k b^s_{ij} \partial_{t_s},
\end{equation}
which shows, in particular, that in contrast with the Euclidean case the \emph{horizontal Hessian matrix}
$$
X^2u = \left( X_i X_j u \right)_{i,j=1,\ldots,m},
$$
is not symmetric. More frequently, we will consider the {\it symmetrized horizontal Hessian}
$$
(X^2u)^*= \left( u_{,ij}\right)_{i,j=1,\ldots,m}, 
$$
where
\[
u_{,ij} = \frac{1}{2} (X_iX_j u + X_j X_i u) = X_i X_{j} - \frac{1}{2}
[X_i,X_j].
\]

In the proof of Theorem \ref{T:main} we will need to compute the horizontal
gradient and symmetrized horizontal Hessian for powers of the homogeneous norm
$N$ in \eqref{N}. In order to do this, we will build up to $N$ through several
preliminary steps. We introduce three more functions $\psi,\chi,a:\G\to\R$ by
the formulas
$$
\psi(g) := |z(g)|^2 = \sum_{i=1}^m z_i^2,
$$
$$
\chi(g) := |t(g)|^2 = \sum_{s=1}^k t_s^2,
$$
and
$$
a(g) := \psi(g)^2 + 16 \chi(g) = N(g)^4.
$$
We will compute first the horizontal derivatives
of $\psi$, then $\chi$, then $a$ and finally $N$.

\begin{lemma} 
Let $\G$ be a Carnot group of step two. (i) For any $i$ and $j$, $X_i(z_j) =
\delta_{ij}$. Then $X_i\psi=2z_i$ and $|X\psi|^2=4\psi$.
(ii) For any $i$, $j$ and $\ell$, $X_i X_j(z_\ell) =
0$. Then $X_i X_j \psi = 2\delta_{ij}$ and
$\psi_{,ij}=2\delta_{ij}$.
\end{lemma}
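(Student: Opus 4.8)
The plan is to compute everything directly in the logarithmic coordinates $(z,t)$, using the explicit representation \eqref{firstd} of the horizontal vector fields, namely $X_i = \partial_{z_i} - \frac{1}{2}\sum_{s=1}^k\sum_{j=1}^m b^s_{ij}z_j\,\partial_{t_s}$. The key observation is that each coordinate function $z_\ell$ is independent of the central variables $t_1,\dots,t_k$, so the second (``vertical'') part of $X_i$ annihilates it. Hence $X_i(z_\ell) = \partial_{z_i}(z_\ell) = \delta_{i\ell}$, which is the first assertion of (i). Note that this uses only the step-two structure and the form \eqref{firstd}, not the $H$-type hypothesis.

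For the remaining identities in (i), I would apply the Leibniz rule to $\psi = \sum_{j=1}^m z_j^2$: since $X_i$ is a derivation, $X_i\psi = \sum_{j=1}^m 2z_j\,X_i(z_j) = \sum_{j=1}^m 2z_j\delta_{ij} = 2z_i$. Summing the squares then gives $|X\psi|^2 = \sum_{i=1}^m(X_i\psi)^2 = \sum_{i=1}^m 4z_i^2 = 4\psi$.

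For (ii), the point is that $X_j(z_\ell) = \delta_{j\ell}$ is a constant, so a further application of $X_i$ kills it: $X_iX_j(z_\ell) = X_i(\delta_{j\ell}) = 0$. Applying $X_i$ to the already-computed $X_j\psi = 2z_j$ then yields $X_iX_j\psi = 2X_i(z_j) = 2\delta_{ij}$. Finally, for the symmetrized Hessian one may either symmetrize this directly, or use $\psi_{,ij} = X_iX_j\psi - \frac{1}{2}[X_i,X_j]\psi$ together with the commutator formula \eqref{commutator}, $[X_i,X_j] = \sum_{s=1}^k b^s_{ij}\partial_{t_s}$; since $\psi$ depends only on $z$, the bracket term vanishes and $\psi_{,ij} = X_iX_j\psi = 2\delta_{ij}$.

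There is no genuine obstacle here: the computation is entirely mechanical once \eqref{firstd} is available, and the lemma is really the base case of the inductive scheme $\psi \rightsquigarrow \chi \rightsquigarrow a \rightsquigarrow N$ described above. The only care needed is to exploit that each $X_i$ is a derivation satisfying Leibniz, so that composite expressions such as $X_iX_j\psi$ are evaluated by differentiating the already-known first derivatives rather than re-expanding from scratch; the real bookkeeping begins one step later with $\chi = |t|^2$, where the vertical part of $X_i$ no longer drops out.
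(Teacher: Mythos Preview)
Your proposal is correct and follows exactly the direct computation the paper has in mind; in fact the paper omits the proof entirely, calling the lemma ``completely trivial.'' Your argument via \eqref{firstd}, the $t$-independence of $z_\ell$, and the Leibniz rule is precisely the intended one, and your remark that the commutator term $[X_i,X_j]\psi$ vanishes because $\psi$ depends only on $z$ is the cleanest way to see $\psi_{,ij}=X_iX_j\psi$.
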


This lemma is completely trivial; the proof will be omitted. Using \eqref{firstd}, \eqref{gc} and \eqref{B} it is not difficult to prove the following result.

\begin{lemma}
Let $\G$ be a Carnot group of step two. (i) For any $i$ and $s$, we have $X_i
t_s = \frac12 B_{i s}$. Then
$$
X_i \chi = \langle J(t) z, e_i \rangle
$$
and thus $|X\chi|^2 = |J(t)z)|^2$. In particular, if $\G$ is of $H$-type, we
have from \eqref{ht} $|X\chi|^2 = \psi \chi$.

(ii) For any $i, j=1,\ldots,m$ and $s=1,\ldots,k$, one has \[
X_i X_j t_s = \tfrac12
\langle J(\ve_s) e_i, e_j \rangle = \frac{1}{2} b^s_{ij}.
\]
Then
$$
X_i X_j \chi = \langle J(t)e_i,e_j\rangle + \tfrac12
\sum_s B_{i s} B_{j s},
$$
and
$$
\chi_{,ij} = \tfrac12 \sum_s B_{i s} B_{j s}.
$$
\end{lemma}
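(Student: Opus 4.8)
The plan is to compute $X_it_s$ directly from the explicit formula \eqref{firstd}, then iterate to get the second horizontal derivatives $X_iX_jt_s$, and finally assemble the formulas for $X_i\chi$ and $X_iX_j\chi$ via the product and chain rules applied to $\chi=\sum_s t_s^2$. First I would apply \eqref{firstd} to the coordinate function $t_s$: since $\partial_{z_i}t_s=0$ and $\partial_{t_\ell}t_s=\delta_{\ell s}$, only the second term of $X_i$ survives, giving $X_it_s=-\tfrac12\sum_{j=1}^m b^s_{ij}z_j=\tfrac12 B_{is}$ by the definition \eqref{B}. Then $X_i\chi=\sum_s 2t_s X_it_s=\sum_s t_s B_{is}=\langle J(t)z,e_i\rangle$ using \eqref{B} and the fact that $t=\sum_s t_s\ve_s$ together with linearity of $J$. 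From this, $|X\chi|^2=\sum_i\langle J(t)z,e_i\rangle^2=|J(t)z|^2$ since the $e_i$ form an orthonormal basis of $V_1$; in the $H$-type case, \eqref{ht} gives $|J(t)z|^2=|z|^2|t|^2=\psi\chi$.

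For part (ii) I would differentiate $X_it_s=\tfrac12 B_{is}=-\tfrac12\sum_\ell b^s_{i\ell}z_\ell$ with $X_j$. Applying \eqref{firstd} to $X_j$ acting on this expression: the $\partial_{t_r}$ part of $X_j$ annihilates it (it depends only on $z$), so $X_jX_it_s=-\tfrac12\sum_\ell b^s_{i\ell}X_j z_\ell=-\tfrac12\sum_\ell b^s_{i\ell}\delta_{j\ell}=-\tfrac12 b^s_{ij}$. By the skew-symmetry $b^s_{ij}=-b^s_{ji}$ noted after \eqref{B}, this equals $\tfrac12 b^s_{ji}=\tfrac12 b^s_{ji}$; being a little careful with indices, one gets $X_iX_jt_s=\tfrac12 b^s_{ij}=\tfrac12\langle J(\ve_s)e_i,e_j\rangle$, which is the claimed formula (and in particular is already symmetric in $i,j$ only after using skew-symmetry, consistent with the non-symmetry of the horizontal Hessian). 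Then $X_iX_j\chi=X_i\bigl(\sum_s 2t_s X_jt_s\bigr)=\sum_s 2\bigl(X_it_s\,X_jt_s+t_s\,X_iX_jt_s\bigr)=\sum_s\bigl(\tfrac12 B_{is}B_{js}\bigr)+\sum_s t_s b^s_{ij}$. The second sum is $\sum_s t_s\langle J(\ve_s)e_i,e_j\rangle=\langle J(t)e_i,e_j\rangle$, giving $X_iX_j\chi=\langle J(t)e_i,e_j\rangle+\tfrac12\sum_s B_{is}B_{js}$.

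Finally, for the symmetrized Hessian, I use $\chi_{,ij}=X_iX_j\chi-\tfrac12[X_i,X_j]\chi$. By the commutator formula \eqref{commutator}, $[X_i,X_j]\chi=\sum_s b^s_{ij}\partial_{t_s}\chi=\sum_s b^s_{ij}\cdot 2t_s=2\langle J(t)e_i,e_j\rangle$ (again using $b^s_{ij}=\langle J(\ve_s)e_i,e_j\rangle$ and linearity). Subtracting half of this from $X_iX_j\chi$ exactly cancels the $\langle J(t)e_i,e_j\rangle$ term, leaving $\chi_{,ij}=\tfrac12\sum_s B_{is}B_{js}$, which is manifestly symmetric as it should be.

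The computation is essentially a bookkeeping exercise with the explicit vector-field formula \eqref{firstd} and the definitions \eqref{gc}, \eqref{B}. The only point requiring genuine care — and the place where sign errors are most likely — is tracking the skew-symmetry of $b^s_{ij}$ through the second derivative and making sure the antisymmetric part is correctly disposed of when passing to the symmetrized Hessian; once that is handled, the chain-rule assembly for $\chi=|t|^2$ and the identification of the resulting sums as $\langle J(t)\cdot,\cdot\rangle$ via $t=\sum_s t_s\ve_s$ is routine.
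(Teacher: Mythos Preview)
Your argument is correct and is exactly the natural direct computation from \eqref{firstd}, \eqref{gc}, \eqref{B} and \eqref{commutator}; the paper itself does not supply a proof for this lemma but simply refers to Proposition~6.5 of \cite{DGN}, so there is nothing further to compare. The only cosmetic wrinkle is that you first compute $X_jX_it_s=-\tfrac12 b^s_{ij}$ and then swap indices to recover $X_iX_jt_s=\tfrac12 b^s_{ij}$; that is fine, though note your parenthetical remark is slightly off---$X_iX_jt_s$ is skew-symmetric in $i,j$, not symmetric, which is why the antisymmetric piece $\langle J(t)e_i,e_j\rangle$ shows up in $X_iX_j\chi$ and is then removed upon symmetrization.
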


This lemma is proved in \cite{DGN} (see Proposition 6.5). We observe in
passing some useful identities related to the coefficients $B_{i s}$, which
follow from the symmetry properties of the symplectic map $J$. First, for any
$s=1,\ldots,k$, we have 
\begin{equation}\label{b-useful-1}
\sum_i B_{i s} z_i = \langle J(\ve_s)z,z \rangle = 0.
\end{equation}
Second, when $\G$ is of $H$-type, then using \eqref{ht2} for any $r, s=1,\ldots,k$, we obtain
\begin{equation}\label{b-useful-2}
\sum_i B_{i r} B_{i s} = \langle J(\ve_r)z,J(\ve_s)z
\rangle = \langle \ve_r,\ve_s \rangle |z|^2 = \delta_{rs} \psi.
\end{equation}

The expression $|z|^2 z + 4 J(t) z$ will appear repeatedly in what
follows. We introduce a special notation for this expression, writing
\begin{equation}\label{A}
A = \psi z + 4 J(t) z.
\end{equation}
Note that 
\[
\langle A, z \rangle = \psi |z|^2 = \psi^2.
\]
When $\G$ is of $H$-type, then  using \eqref{ht} we find
\begin{equation}\label{A2}
|A|^2 = \psi^2 |z|^2 + 16 |z|^2 |t|^2 = \psi a.
\end{equation}

\begin{lemma}\label{L:a}
Let $\G$ be a Carnot group of step two, then
\[
Xa = 4A. 
\]
Moreover, for any $i, j=1,\ldots,m$,
$$
a_{,ij} = 4\psi \delta_{ij} + 8 ( z_i z_j + \sum_s B_{i s} B_{j s}).
$$
In particular, when $\G$ is of $H$-type, then 
\[
|Xa|^2 = 16 \psi a.
\]
\end{lemma}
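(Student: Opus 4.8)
The plan is to compute $Xa$ and the symmetrized Hessian $a_{,ij}$ directly from the definition $a = \psi^2 + 16\chi$, feeding in the horizontal derivatives of $\psi$ and $\chi$ already recorded in the two preceding lemmas. For the gradient, I would write $X_i a = 2\psi\, X_i\psi + 16\, X_i\chi$; using $X_i\psi = 2z_i$ from the first lemma and $X_i\chi = \langle J(t)z, e_i\rangle$ from the second, this becomes $X_i a = 4\psi z_i + 16\langle J(t)z,e_i\rangle$, which is precisely the $i$-th component of $4A = 4(\psi z + 4J(t)z)$ by the definition \eqref{A}. Note this step is valid in any Carnot group of step two, since neither $X_i\psi$ nor $X_i\chi$ required the $H$-type hypothesis.

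For the Hessian, the key observation is that the symmetrization is a first-order correction, so I can use the product rule for the symmetrized second derivative. Concretely, for a product $fg$ one has $(fg)_{,ij} = f\, g_{,ij} + g\, f_{,ij} + \tfrac12(X_if\, X_jg + X_jf\, X_ig)$ — this follows because the commutator term $[X_i,X_j]$ that distinguishes $u_{,ij}$ from $X_iX_ju$ is itself a first-order (indeed vertical) operator, so it only sees the first-order Leibniz terms. Applying this to $\psi^2 = \psi\cdot\psi$ gives $(\psi^2)_{,ij} = 2\psi\,\psi_{,ij} + X_i\psi\, X_j\psi = 4\psi\delta_{ij} + 4z_iz_j$, using $\psi_{,ij} = 2\delta_{ij}$ and $X_i\psi = 2z_i$. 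Then $a_{,ij} = (\psi^2)_{,ij} + 16\chi_{,ij} = 4\psi\delta_{ij} + 4z_iz_j + 16\cdot\tfrac12\sum_s B_{is}B_{js} = 4\psi\delta_{ij} + 4z_iz_j + 8\sum_s B_{is}B_{js}$. This is slightly off from the claimed formula, which has $8(z_iz_j + \sum_s B_{is}B_{js})$; I would double-check the coefficient bookkeeping, as the discrepancy suggests the intended normalization is $(\psi^2)_{,ij} = 4\psi\delta_{ij} + 8z_iz_j$ coming from a different convention, or that $\psi_{,ij}=2\delta_{ij}$ combines with a product-rule term differently — in any case this is a routine constant-chasing matter, not a conceptual obstacle, and again it uses only the step-two structure, not $H$-type.

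Finally, for $|Xa|^2 = 16\psi a$ under the $H$-type assumption: since $Xa = 4A$, we have $|Xa|^2 = 16|A|^2$, and \eqref{A2} already gives $|A|^2 = \psi a$ when $\G$ is of $H$-type (this is exactly where \eqref{ht} enters, via $|J(t)z|^2 = \psi\chi$). So this last claim is immediate from the gradient computation combined with the previously established identity \eqref{A2}. The only place any real care is needed is tracking the numerical coefficients in the Hessian formula through the symmetrized Leibniz rule; everything else is a direct substitution.
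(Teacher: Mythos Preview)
Your approach is exactly the paper's: the paper merely says that both parts are direct computations from the formulas for the derivatives of $\psi$ and $\chi$, together with $a=\psi^2+16\chi$, and that is precisely what you do. The gradient and the $H$-type norm identity are handled correctly.

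The only issue is the Hessian coefficient, and it is your Leibniz rule that is off, not the paper's normalization. For the symmetrized second derivative one has
\[
(fg)_{,ij} = f\, g_{,ij} + g\, f_{,ij} + X_if\, X_jg + X_jf\, X_ig,
\]
with no factor of $\tfrac12$ on the cross terms: expand $X_iX_j(fg)$ and $X_jX_i(fg)$ by the ordinary Leibniz rule and average. Taking $f=g=\psi$ then gives
\[
(\psi^2)_{,ij} = 2\psi\,\psi_{,ij} + 2\,X_i\psi\,X_j\psi = 4\psi\,\delta_{ij} + 8\,z_iz_j,
\]
and adding $16\,\chi_{,ij} = 8\sum_s B_{is}B_{js}$ yields the stated formula $a_{,ij} = 4\psi\,\delta_{ij} + 8\bigl(z_iz_j + \sum_s B_{is}B_{js}\bigr)$ on the nose. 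So the ``routine constant-chasing'' resolves cleanly once the Leibniz rule is corrected.
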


Both parts of the lemma are direct computations using the formulas for the
derivatives of $\psi$ and $\chi$ in the preceding lemmas, together with the
formula for $a$.

Finally, we arrive at the formulas for the horizontal gradient and symmetrized
horizontal Hessian of the homogeneous norm $N$.

\begin{lemma}\label{L:N-lemma}
Let $\G$ be a Carnot group of step  two, then
$$
X N = N^{-3}  A.
$$
Furthermore, for any $i, j=1,\ldots,m$,
$$
N_{,ij} = N^{-7} \left( a \psi \delta_{ij} + 2a \left(z_i z_j + \sum_s B_{i s} B_{j s} \right) - 3 \langle
  A,e_i \rangle \langle A,e_j \rangle \right).
$$
When $\G$ is of $H$-type, then
\[
|XN|^2 = N^{-2} \psi.
\]
\end{lemma}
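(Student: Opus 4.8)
The plan is to derive all three formulas from Lemma \ref{L:a} by the chain rule applied to the relation $N = a^{1/4}$, equivalently $a = N^4$. Since each $X_i$ is a derivation, $X_i N = \frac14 a^{-3/4} X_i a$; using $a^{-3/4} = N^{-3}$ together with $X_i a = 4\langle A, e_i\rangle$ (the component form of $Xa = 4A$ from Lemma \ref{L:a}) gives $X_i N = N^{-3}\langle A, e_i\rangle$ for every $i$, i.e. $XN = N^{-3} A$. In the $H$-type case this already yields the last identity: $|XN|^2 = N^{-6}|A|^2$, and by \eqref{A2} one has $|A|^2 = \psi a = \psi N^4$, hence $|XN|^2 = N^{-2}\psi$.

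For the symmetrized Hessian, I would apply $X_i$ to $X_j N = \frac14 a^{-3/4} X_j a$ via the product rule, using $X_i(a^{-3/4}) = -\frac34 a^{-7/4} X_i a$, to obtain
\[
X_i X_j N = -\tfrac{3}{16}\, a^{-7/4} (X_i a)(X_j a) + \tfrac14\, a^{-3/4}\, X_i X_j a.
\]
The first term on the right is already symmetric in $i$ and $j$, so passing to the symmetrization $N_{,ij} = \frac12(X_i X_j N + X_j X_i N)$ affects only the second term, replacing $X_i X_j a$ by $a_{,ij}$. Now substitute $X_i a = 4\langle A, e_i\rangle$ (so $(X_i a)(X_j a) = 16\langle A,e_i\rangle\langle A,e_j\rangle$), the formula $a_{,ij} = 4\psi\delta_{ij} + 8(z_i z_j + \sum_s B_{is}B_{js})$ from Lemma \ref{L:a}, and the identities $a^{-7/4} = N^{-7}$, $a^{-3/4} = a N^{-7}$. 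Collecting the numerical constants ($-\frac{3}{16}\cdot 16 = -3$ and $\frac14\cdot 4 = 1$, $\frac14\cdot 8 = 2$) produces exactly
\[
N_{,ij} = N^{-7}\Bigl( a\psi\delta_{ij} + 2a\bigl(z_i z_j + \textstyle\sum_s B_{is}B_{js}\bigr) - 3\langle A, e_i\rangle\langle A, e_j\rangle \Bigr).
\]

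The computation is entirely routine differentiation and bookkeeping; there is no genuine obstacle. The one point that requires a moment's care is that the horizontal Hessian $X_i X_j N$ is not symmetric (cf. \eqref{commutator}), which is why one must symmetrize before identifying the second-order term with $a_{,ij}$. Fortunately the first-order square term $(X_i a)(X_j a)$ is automatically symmetric, so no commutator corrections enter, and the symmetrization is harmless for that piece. Everything else is an application of Lemma \ref{L:a} and the relation $a = N^4$.
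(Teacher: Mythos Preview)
Your proposal is correct and follows exactly the paper's route: the paper also derives $X_iN=\tfrac14 a^{-3/4}X_ia$ and $N_{,ij}=\tfrac14 a^{-3/4}a_{,ij}-\tfrac{3}{16}a^{-7/4}X_iaX_ja$ from the chain rule, then substitutes from Lemma~\ref{L:a} and invokes \eqref{A2} for $|XN|^2$. Your treatment is in fact more explicit, spelling out the symmetrization step and the bookkeeping that the paper leaves implicit.
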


The proof of the first part of Lemma \ref{L:N-lemma} is an immediate
consequence of Lemma \ref{L:a} and the identities $X_i N = \frac14 a^{-3/4}
X_i a$ and
$$
N_{,ij} = \frac14 a^{-3/4} a_{,ij} - \frac3{16} a^{-7/4}X_i a X_j a.
$$
The second part follows directly from \eqref{A2}.

\section{Singular $p$-harmonic functions}\label{S:s}

The basic objects of study in this paper are the horizontal Laplacian
\eqref{L} and its nonlinear generalization, the horizontal $p$-Laplacian
\eqref{Lp}. We also consider the {\it horizontal $\infty$-Laplacian}
$$
\cL_\infty u = \sum_{i,j} u_{,ij} X_i u X_j u =
\langle (X^2u)^* Xu,Xu \rangle,
$$
which arises formally as a limit of $\cL_p u$ as $p\to\infty$. 

These nonlinear operators are only well-defined on functions which are twice
horizontally differentiable (and even on such functions, the action of
$\mathcal L_p$ is the range $1<p<2$ is not completely justified). The correct
notion of $p$-superharmonic (or $p$-subharmonic) function must be introduced
in the standard weak sense, see \cite{CDG1}. In this paper, however, we apply
the operator $\mathcal L_p$ only to functions which are sufficiently smooth,
and only in the range $p>2$. Therefore, we will perform all our computations
using strong derivatives. In this framework, we say that $u$ is {\it
  $p$-subharmonic} if $\cL_p u \ge 0$. Similarly, we say that $u$ is {\it
  $p$-superharmonic} if $\cL_p u \le 0$. We also have the following formula
\begin{equation}\label{LLL}
\cL_p u = |Xu|^{p-2} \cL u + (p-2) |Xu|^{p-4} \cL_\infty u
\end{equation}
which relates the horizontal $p$-Laplacian to the horizontal Laplacian and $\infty$-Laplacian.

As we have mentioned in Section \ref{S:intro}, one of the most remarkable
aspects of $H$-type groups is the fact that the fundamental solution of the
horizontal Laplacian is given by the formula \eqref{gamma}. Even more
interestingly, this phenomenon continues to hold in the nonlinear case. One
has in fact the following result from \cite{CDG2} (see also \cite{HH} for the
case $p=Q$).

\begin{proposition}\label{P:pfs}
For every $1<p<\infty$ the
function
\begin{equation}\label{pfs1}
\Gamma_p(g,g') = \Gamma_p(g',g) =
\begin{cases}
-\frac{p-1}{Q-p} \sigma_p^{-\frac{1}{p-1}}
N(g^{-1}g')^{\frac{p-Q}{p-1}},\ \ p\not= Q, \\
\\
-\ \sigma_p^{-\frac{1}{p-1}} \log\ N(g^{-1} g'),\ \ \ p = Q,
\end{cases}
\end{equation}
with $g'\not=g$, is a fundamental solution of \eqref{Lp} with
singularity at $g\in \G$.
\end{proposition}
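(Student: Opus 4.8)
\textbf{Proof proposal for Proposition \ref{P:pfs}.}

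The plan is to verify directly that, away from the singularity, the function $\Gamma_p(g,g')$ is a classical solution of $\mathcal L_p u = 0$, and then to identify the correct normalization of the constant $\sigma_p$ so that $\mathcal L_p \Gamma_p = -\delta_g$ in the sense of distributions. By left-invariance of $\mathcal L_p$ and the fact that $N(g^{-1}g') = N((g')^{-1}g)$, it suffices to treat the case $g = e$ (the identity), so we must show that $u(g') := N(g')^{\frac{p-Q}{p-1}}$ (respectively $\log N(g')$ when $p = Q$) is $\mathcal L_p$-harmonic on $\G\setminus\{e\}$. Writing $u = \phi\circ N$ for a function $\phi$ of one variable, the key computational input is Lemma \ref{L:N-lemma}: from $XN = N^{-3}A$ and $|XN|^2 = N^{-2}\psi$ (valid on $H$-type groups) we get $|Xu|^2 = \phi'(N)^2 N^{-2}\psi$, and a chain-rule expansion gives
\[
\mathcal L_p u = |Xu|^{p-2}\Big( \phi''(N)|XN|^2 + \phi'(N)\,\mathcal L N\Big) + (p-2)|Xu|^{p-4}\,\mathcal L_\infty u,
\]
using \eqref{LLL}. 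One then computes $\mathcal L N = \sum_i N_{,ii}$ and $\mathcal L_\infty u$ from the Hessian formula in Lemma \ref{L:N-lemma}; the $H$-type identities \eqref{b-useful-1}, \eqref{b-useful-2}, \eqref{A2} and $\langle A,z\rangle = \psi^2$ are exactly what is needed to collapse the resulting traces into expressions involving only $N$ and $\psi$. I expect $\mathcal L N = (Q-1)N^{-1}|XN|^2 = (Q-1)N^{-3}\psi$ and $\mathcal L_\infty(N) = 0$ (the gauge $N$ is $\infty$-harmonic off the origin), so that the whole computation reduces to the one-variable ODE
\[
\phi''(r) + \frac{Q-1}{r}\,\phi'(r) + (p-2)\frac{\phi''(r)\phi'(r)^2}{\phi'(r)^2} = (p-1)\phi''(r) + \frac{Q-1}{r}\phi'(r) = 0
\]
after factoring out the common power of $|Xu|$ and $N^{-2}\psi$. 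Solving $(p-1)\phi'' + \frac{Q-1}{r}\phi' = 0$ yields $\phi'(r) = c\, r^{-\frac{Q-1}{p-1}}$, hence $\phi(r)$ is a multiple of $r^{\frac{p-Q}{p-1}}$ when $p\neq Q$ and of $\log r$ when $p = Q$ — precisely the exponents in \eqref{pfs1}.

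Having established harmonicity off the singularity, the remaining task is the distributional identity. Here one argues as in the linear Folland--Kaplan case: for $\varphi\in C_0^\infty(\G)$ one writes $\int_\G \Gamma_p\, \mathcal L_p\varphi\, dg = \lim_{\ve\to 0}\int_{\{N>\ve\}} \Gamma_p\,\mathcal L_p\varphi\, dg$, integrates by parts twice (using that $\mathcal L_p$ is formally self-adjoint on smooth functions and that $\Gamma_p$ is $\mathcal L_p$-harmonic away from $e$), and analyzes the boundary terms on the gauge sphere $\{N = \ve\}$. The relevant flux is $\int_{\{N=\ve\}} |X\Gamma_p|^{p-2}\langle X\Gamma_p,\nu\rangle\, d\sigma_\ve$, and since $|X\Gamma_p|^{p-2}X\Gamma_p$ is homogeneous of the correct degree under $\delta_\lambda$, this integral is independent of $\ve$; its value is a positive constant times $\sigma_p^{-1/(p-1)}$, and one defines $\sigma_p$ (essentially the appropriate "horizontal perimeter" of the unit gauge ball, cf. the constant $c(\G)$ in \eqref{gamma}) so that the flux equals $-1$, giving $\mathcal L_p\Gamma_p = -\delta_e$. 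The symmetry $\Gamma_p(g,g') = \Gamma_p(g',g)$ is immediate from $N(g^{-1}g') = N((g')^{-1}g) = N(g^{-1}g')$, which follows from $N(h) = N(h^{-1})$ noted after \eqref{N}.

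The main obstacle is the Hessian trace computation: verifying $\mathcal L N = (Q-1)N^{-3}\psi$ and $\mathcal L_\infty N = 0$ requires carefully summing the formula for $N_{,ij}$ in Lemma \ref{L:N-lemma} against $\delta_{ij}$ and against $X_iN\,X_jN = N^{-6}\langle A,e_i\rangle\langle A,e_j\rangle$, and this is where the $H$-type hypothesis is indispensable — the identities $\sum_s B_{is}B_{js}$ traced against $\delta_{ij}$ gives $\sum_{i,s}B_{is}^2 = k\psi$ by \eqref{b-useful-2}, while $\sum_i z_i^2 = \psi$, and $\sum_i \langle A,e_i\rangle^2 = |A|^2 = \psi a$ by \eqref{A2}; keeping track of the powers of $N$ (note $a = N^4$) and the numerical coefficients $4,8,3$ is the delicate bookkeeping. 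A secondary subtlety is justifying the double integration by parts for the quasilinear operator $\mathcal L_p$ near a point singularity — one needs that $\Gamma_p$ and $|X\Gamma_p|^{p-2}X\Gamma_p$ are locally integrable near $e$, which holds exactly because $\frac{Q-p}{p-1} < Q$ in the relevant ranges, together with the fact (provable via the homogeneity of $N$ and the co-area formula for $\delta_\lambda$) that the gauge-sphere flux integrals converge. For the case $p = Q$ one checks separately that $\log N$ is locally integrable and that the flux computation goes through with $\phi'(r) = c/r$; for $p = \infty$ the statement is understood as the limiting assertion that $N^{-1}$ is $\infty$-harmonic off $e$, which is the $p\to\infty$ degenerate case of the ODE above and can alternatively be read off directly from $\mathcal L_\infty N = 0$.
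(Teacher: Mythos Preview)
The paper does not supply a proof of Proposition~\ref{P:pfs}; it simply cites \cite{CDG2} (and \cite{HH} for $p=Q$) and records the meaning of the statement via the identity $\phi(g)=\int_\G|X\Gamma_p|^{p-2}\langle X\Gamma_p,X\phi\rangle\,dg'$. Your sketch is therefore not being compared to an in-paper argument but to the standard route in the literature, which it largely follows correctly. In particular, your harmonicity computation is sound: using Lemma~\ref{L:N-lemma} together with \eqref{b-useful-2} and \eqref{A2} one indeed gets $\mathcal L N=(Q-1)N^{-3}\psi=(Q-1)N^{-1}|XN|^2$ and $\mathcal L_\infty N=0$ (the latter is exactly \eqref{Linfinity}), whence the radial reduction $(p-1)\phi''+\frac{Q-1}{r}\phi'=0$ and the exponents in \eqref{pfs1}.

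There is, however, a genuine slip in the distributional step. You write that one ``integrates by parts twice (using that $\mathcal L_p$ is formally self-adjoint on smooth functions)''. The operator $\mathcal L_p$ is nonlinear for $p\neq 2$, so there is no self-adjointness to invoke and no identity of the form $\int\Gamma_p\,\mathcal L_p\varphi=\int\varphi\,\mathcal L_p\Gamma_p$. The correct weak formulation---stated in the paper immediately after the proposition---is
\[
\int_\G |X\Gamma_p|^{p-2}\langle X\Gamma_p,X\varphi\rangle\,dg'=\varphi(g),
\]
and one verifies it with a \emph{single} application of the divergence theorem on $\{N>\ve\}$: the interior term vanishes since $\mathcal L_p\Gamma_p=0$ there, and the surviving boundary flux $\int_{\{N=\ve\}}|X\Gamma_p|^{p-2}\langle X\Gamma_p,\nu\rangle\,d\sigma_\ve$ is scale-invariant by homogeneity and equals $\varphi(e)$ once $\sigma_p=Q\omega_p$ with $\omega_p=\int_{B(e,1)}|XN|^p\,dg$ is chosen as in the paper. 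Apart from this misphrasing of the integration-by-parts step, your outline matches the argument in \cite{CDG2}.
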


In \eqref{pfs1} we have let $\sigma_p = Q \omega_p$, where
\[
\omega_p = \int_{B(e,1)} |X N(g)|^p dg.
\]
The meaning of Proposition \ref{P:pfs} is that for every $\phi\in C^\infty_0(\G)$ one has
\[
\phi(g) =  \int_\G |X\Gamma_p(g,g')|^{p-2} \langle X\Gamma_p(g,g'),X\phi(g') \rangle dg'.
\]

We note that $\Gamma(g,\cdot)\in C^\infty(\G\setminus\{g\})$, and furthermore
$$
\cL_p(\Gamma_p(g,\cdot)) \equiv 0 \qquad \mbox{in $\G \setminus \{g\}$.}
$$
Note that for every $g\not= g'$ we find
\[
\underset{p\to \infty}{\lim}\ \Gamma_p(g,g') = N( g^{-1} g' ).
\]
This suggests that we should also have
\begin{equation}\label{Linfinity}
\cL_\infty(N(g^{-1}\cdot)) \equiv 0 \qquad \mbox{in $\G\setminus \{g\}$.}
\end{equation}
The identity \eqref{Linfinity} is also true, see Proposition 6.4 in
\cite{DGN}. In fact, the homogeneous norm $N$ is a viscosity solution of the
horizontal $\infty$-Laplacian, see \cite{B}, \cite{BC} and \cite{W}.

\section{Proof of Theorem \ref{T:main}}\label{S:proofmain}

We are now prepared to give the proof of our main result.

\begin{proof}[Proof of Theorem \ref{T:main}]
Let $\alpha$ and $\rho$ be as in the statement of the theorem, and define
\begin{equation}\label{R1}
F(g) = R_{Q-\alpha}(\rho)(g) = \int_{\G} \frac{\rho(g')}{N((g')^{-1}
  g)^{\alpha}} dg' \qquad \mbox{if $0\not=\alpha<Q$}
\end{equation}
and
$$
F(g) = R_Q(\rho)(g) = \int_{\G} \rho(g') \log N(g^{-1} g') dg' \qquad \mbox{if
  $\alpha = 0$.}
$$
Here we used the fact that $N((g')^{-1} g) = N(g^{-1} g')$.

Let $2<p<\infty$; the case $p=\infty$ can be seen as a suitable limit of the
following computations. Using \eqref{LLL} we have
\begin{equation}\label{LL}
|XF|^{4-p} \cL_p F = |XF|^2 \cL F + (p-2) \cL_\infty F.
\end{equation}
If we now set $q = -\alpha$, then differentiating under the integral sign, and
using the left-invariance of the vector fields $X_1,\ldots,X_m$, we find that
\begin{equation}\label{first-deriv}
X_i F(g) = q \int_\G N((g')^{-1} g)^{q-1} X_i N((g')^{-1} g)
\rho(g') dg'
\end{equation}
and also that 
\begin{align}\label{second-deriv}
F_{,ij}(g)  &  = q \int_\G N((g')^{-1} g)^{q-1}
N_{,ij}((g')^{-1} g) \rho(g') dg'
\\ 
& + q(q-1) \int_\G N((g')^{-1} g)^{q-2} X_i N((g')^{-1}g) X_j
N((g')^{-1}g)\rho(g') dg'
\notag
\end{align}
if $q\ne 0$. (In the case $q=-\alpha=0$, the formulas must be modified by
removing the various factors of $q$ which appear in \eqref{first-deriv} and
\eqref{second-deriv}.)

Before proceeding any further, we introduce another simplifying notation. In
the remainder of the proof we will have need for all of the data computed in
the preceding lemmas, evaluated at different places in the group, e.g., at
$(g')^{-1}g$ for various points $g'$. In order to keep the formulas involved
from expanding out of control, we will write
$$
N_{g'}(g) := N((g')^{-1}g).
$$
After factoring out a constant multiple of $q^3$ from all terms, the right
hand side of \eqref{LL} is equal to the sum, over $i$ and $j$, of the
expression
\begin{equation*}\begin{split}
& \int_\G \left( (q-1) N_{g'}^{q-2} (X_i N_{g'})^2 + N_{g'}^{q-1}
  (N_{g'})_{,ii} \right) \rho(g') dg'
\biggl(\int_\G N_{g'}^{q-1} X_j N_{g'} \rho(g')dg'\biggr)^2 \\ 
& \quad + (p-2) \biggl( \int_\G \left( (q-1) N_{g'}^{q-2} X_i N_{g'}
  X_j N_{g'} + N_{g'}^{q-1} (N_{g'})_{,ij} \right) \rho(g') 
dg'\biggr) \\ & \qquad \biggl(\int_\G N_{g'}^{q-1} X_i N_{g'}
\rho(g') dg'\biggr) \biggl(\int_\G N_{g'}^{q-1} X_j N_{g'}
\rho(g') dg'\biggr). 
\end{split}\end{equation*}

Recall that there is an implicit variable $g$ in this expression; it is the
argument of the various terms involving the norm. Our claim is that this
expression, as a function of $g$, has a definite sign on all of $\G$,
depending only on the relative sizes of $p$ and $q = -\alpha$, as indicated
in the hypotheses.

We rewrite the preceding expression as a triple integral, introducing dummy
variables $g_1'$, $g_2'$ and $g_3'$. At this point we will simplify the
notation even further by abbreviating $N_1 := N_{g_1'}$, $\rho_1 :=
\rho(g_1')$ and so on. Then the right hand side of \eqref{LL} is equal to
\begin{equation}\label{LL1}
q^3 \sum_{i,j} \int\!\!\!\int\!\!\!\int \rho_1\rho_2\rho_3 N_1^{q-2}
N_2^{q-1} N_3^{q-1} \times \boldM_{ij} 
\end{equation}
where $\boldM_{ij}$ is equal to
\begin{equation*}\begin{split}
&(q-1) (X_i N_1)^2(X_j N_2)(X_j N_3) + N_1((N_1)_{,ii})(X_j N_2)(X_j N_3) \\
&\, + (p-2)(q-1) (X_i N_1)(X_j N_1)(X_i N_2)(X_j N_3) \\
&\quad + (p-2) N_1 ( (N_1)_{,ij} ) (X_i N_2)(X_j N_3).
\end{split}\end{equation*}
Here, the integral is taken with respect to the three variables
$g_1',g_2',g_3'$, over $\G\times\G\times\G$.

Our next task is to substitute the expressions from Lemma \ref{L:N-lemma} into
the above formula. The ensuing equation is a monster expression. In order to
gain some insight into the structure of the resulting formula, we separate it
into two parts which we treat independently. We write
$$
\boldM_{ij} = (q-1) \boldM^1_{ij} + \boldM^2_{ij},
$$
where
$$
\boldM^1_{ij} = (X_i N_1)^2(X_j N_2)(X_j N_3) +
(p-2)(X_i N_1)(X_j N_1)(X_i N_2)(X_j N_3)
$$
and
$$
\boldM^2_{ij} = N_1 ((N_1)_{,ii})(X_j N_2)(X_j N_3) +
(p-2) N_1 ( (N_1)_{,ij} ) (X_i N_2)(X_j N_3).
$$
Using Lemma \ref{L:N-lemma} we compute
\begin{equation*}\begin{split}
&\boldM^1_{ij} = N_1^{-6} N_2^{-3} N_3^{-3} \times \\ & \, \times \biggl[
\langle A_1,e_i \rangle^2 \langle A_2,e_j \rangle \langle A_3,e_j
\rangle + (p-2) \langle A_1,e_i \rangle \langle A_1,e_j \rangle
\langle A_2,e_i \rangle \langle A_3,e_j \rangle \biggr]
\end{split}\end{equation*}
and
\begin{equation*}\begin{split}
&\boldM^2_{ij} = N_1^{-6} N_2^{-3} N_3^{-3} \times \\ & \, \times \biggl[
\biggl( a_1\psi_1 + 2a_1(z_1)_i^2+2a_1\sum_s(B_1)_{i s}^2-3\langle
A_1,e_i\rangle^2 \biggr) \langle A_2,e_j\rangle \langle A_3,e_j
\rangle \biggr. \\ & \quad +(p-2) \biggl( a_1\psi_1\delta_{ij} +
2a_1(z_1)_i(z_1)_j+ \biggr. \\ & \quad \biggl. \biggl. +
2a_1\sum_s(B_1)_{i s}(B_1)_{j s} -3\langle A_1,e_i\rangle\langle
A_1,e_j\rangle \biggr) \langle A_2,e_i \rangle \langle A_3,e_j
\rangle \biggr].
\end{split}\end{equation*}
Here, we have continued to use subscripts to denote the result of evaluation of
the various functions $a$, $\psi$, $A$, etc.\ at the points $(g')^{-1}g$,
i.e., $a_1 := a((g_1')^{-1} g)$ and so on. However, one notation might unfortunately generate confusion. The reader should pay attention to the fact that henceforth $z_1$ means $z((g_1')^{-1} g)$, not the first component of the vector $z = (z_1,\ldots,z_m)$. Accordingly, the notation $(z_1)_i$ indicates the $i$-th component of the vector $z((g_1')^{-1} g)\in V_1$.

Returning to \eqref{LL1}, we perform the summations in $i$ and
$j$. Here, we make use of the fact that the vectors $e_i$ form an
orthonormal basis for $V_1$, and that the functions $A_1,A_2,A_3$ take values
in $V_1$. Consequently, we obtain that the right hand side of \eqref{LL} is
equal to the sum of the following two terms (which are obtained by summing the
expressions for $(q-1)\boldM^1_{ij}$ and $\boldM^2_{ij}$, respectively):
\begin{equation}\label{M12}\begin{split}
&q^3(q-1) \int\!\!\!\int\!\!\!\int \rho_1\rho_2\rho_3 N_1^{q-8} N_2^{q-4}
N_3^{q-4} \times \\
& \qquad \times \biggl( |A_1|^2 \langle A_2,A_3 \rangle + (p-2) \langle A_1,A_2
\rangle \langle A_1,A_3 \rangle \biggr)
\end{split}\end{equation}
and, recalling that $m=\dim V_1$,
\begin{equation}\begin{split}\label{M22}
&q^3 \int\!\!\!\int\!\!\!\int \rho_1\rho_2\rho_3 N_1^{q-8} N_2^{q-4} N_3^{q-4}
\times \\
& \, \times \biggl[ \biggl( (m+2) a_1\psi_1 +2a_1\sum_{i,s}(B_1)_{i s}^2-3|A_1|^2 \biggr) \langle A_2,A_3\rangle \biggr. \\
& \, + (p-2) \biggl( a_1\psi_1\langle A_2,A_3\rangle + 2a_1\langle
A_2,z_1\rangle \langle A_3,z_1\rangle \biggr. \\
& \, \biggl. \biggl. + 2a_1 \sum_{s}\langle
A_2,J(\ve_s) z_1\rangle \langle A_3,J(\ve_s) z_1 \rangle -3\langle
A_1,A_2\rangle\langle A_1,A_3\rangle \biggr) \biggr].
\end{split}\end{equation}
We also used the identity
\begin{equation}\label{BA}
\sum_{i,j}(B_1)_{i s}\langle A_2,e_i\rangle (B_1)_{j
  s} \langle A_3,e_j \rangle
=
\langle A_2,J(\ve_s) z_1\rangle \langle A_3,J(\ve_s) z_1 \rangle,
\end{equation}
valid for each $s=1,\ldots,k$. Identity \eqref{BA} can be easily verified
from \eqref{B} and from the orthonormality of $\{e_1,\ldots,e_m\}$. By
\eqref{b-useful-2}, $\sum_{i}(B_1)_{i s}^2 = \psi$ for each
$s=1,\ldots,k$. Hence $\sum_{i,s}(B_1)_{i s}^2 = k\psi$, where $k$ denotes the
dimension of the second layer $V_2$. 

Using \eqref{Q} and
\eqref{A2}, we regroup the various terms in \eqref{M22} to obtain
\begin{equation}\begin{split}\label{M24}
&q^3 \int\!\!\!\int\!\!\!\int \rho_1\rho_2\rho_3 N_1^{q-8} N_2^{q-4} N_3^{q-4}
\times \\
& \, \times \biggl[ (Q+p-3) |A_1|^2 \langle A_2,A_3\rangle + 2(p-2)a_1\langle
A_2,z_1\rangle \langle A_3,z_1\rangle \biggr. \\
& \, \biggl. + 2(p-2)a_1\sum_{s}\langle A_2,J(\ve_s)z_1\rangle \langle
A_3,J(\ve_s)z_1 \rangle -3(p-2)\langle A_1,A_2\rangle\langle
A_1,A_3\rangle \biggr].
\end{split}\end{equation}
We now recombine \eqref{M12} and \eqref{M24}, obtaining the following formula
for the right hand side of \eqref{LL}:
\begin{equation}\begin{split}\label{M3}
&q^3 \int\!\!\!\int\!\!\!\int \rho_1\rho_2\rho_3 N_1^{q-8} N_2^{q-4} N_3^{q-4} \times \biggl[ (Q+p+q-4) |A_1|^2 \langle A_2,A_3\rangle \biggr. \\
& \quad + 2(p-2)a_1\langle A_2,z_1\rangle \langle A_3,z_1\rangle \\
& \qquad \biggl. + 2(p-2)a_1\sum_{s}\langle A_2,J(\ve_s)z_1\rangle \langle A_3,J(\ve_s)z_1 \rangle \\ & \quad \qquad + (q-4)(p-2)\langle A_1,A_2\rangle\langle A_1,A_3\rangle \biggr].
\end{split}\end{equation}
Observe that the dependence on the $g_2'$ and $g_3'$ variables occurs only in the expressions $A_2$ and $A_3$, and that the integrand is bilinear in those expressions. It is thus natural to introduce the new function
$$
\mathcal K(g) = \int_\G \rho(g') N_{g'}^{q-4}(g) A_{g'}(g) \, dg'
$$
obtained by integrating $A_{g'}$ against the coefficient $\rho(g')
N_{g'}^{q-4}$ which appears in \eqref{M3}. Performing the integration in the
$g_2'$ and $g_3'$ variables simplifies \eqref{M3} as follows:
\begin{equation}\begin{split}\label{M4}
&q^3 \int \rho_1 N_1^{q-8} \biggl[ (Q+p+q-4) |A_1|^2 |\mathcal K|^2 + (q-2)(p-2)
\langle A_1,\mathcal K\rangle^2 \biggr. \\ 
& \, \biggl. + 2(p-2) \biggl( a_1 \langle \mathcal K,z_1\rangle^2 +
a_1\sum_{s} \langle \mathcal K,J(\ve_s)z_1\rangle^2 - \langle A_1,\mathcal K\rangle^2
\biggr) \biggr].
\end{split}\end{equation}
Observe that
$$
(Q+p+q-4) |A_1|^2 |\mathcal K|^2 + (q-2)(p-2) \langle A_1,\mathcal K\rangle^2 \ge 0
$$
for all $A_1$ and $\mathcal K$ by the Cauchy--Schwarz inequality, provided that
$$
q = - \alpha \ge \frac{p-Q}{p-1}
$$
as hypothesized in the statement of the theorem. To complete the proof, we
will show that the expression
\begin{equation}\label{estimate0}
a_1 \langle \mathcal K,z_1\rangle^2 + a_1\sum_{s} \langle
\mathcal K,J(\ve_s)z_1\rangle^2 - \langle A_1,\mathcal K\rangle^2
\end{equation}
is always nonnegative. Using \eqref{A}, we compute
$$
\langle A_1,\mathcal K\rangle = \psi_1 \langle \mathcal K,z_1 \rangle + 4 \langle
\mathcal K,J(t_1) z_1 \rangle,
$$
where, as above for $z_1$, the notation $t_1$ indicates $t((g_1')^{-1} g)$, and not the first component of the vector $t = (t_1,\ldots,t_k)$.
Since $a=\psi^2+16\chi$, the quantity in \eqref{estimate0} is equal to
\begin{equation*}\begin{split}
&16 \chi_1 \langle \mathcal K,z_1 \rangle^2 - 8 \psi_1 \langle \mathcal K,z_1
\rangle \langle \mathcal K,J(t_1)z_1 \rangle \\ & \qquad - 16 \langle
\mathcal K,J(t_1)z_1 \rangle^2 + ((\psi_1)^2 + 16 \chi_1) \sum_s \langle
\mathcal K,J(\ve_s)z_1 \rangle^2,
\end{split}\end{equation*}
which we write as the sum of the following two expressions:
\begin{equation*}\begin{split}
I := 16 \chi_1 \langle \mathcal K,z_1 \rangle^2 - 8 \psi_1 \langle \mathcal
K,z_1 \rangle \langle \mathcal K,J(t_1) z_1 \rangle + (\psi_1)^2
\sum_s \langle \mathcal K,J(\ve_s)z_1 \rangle^2
\end{split}\end{equation*}
and
$$
II := 16 \chi_1 \sum_s \langle \mathcal K,J(\ve_s)z_1 \rangle^2 - 16 \langle
\mathcal K,J(t_1)z_1 \rangle^2.
$$
The proof is now finished by observing that
$$
I = \sum_s \biggl( 4 (t_s)_1 \langle \mathcal K,z_1 \rangle - \psi_1 \langle
  \mathcal K,J(\ve_s)z_1 \rangle \biggr)^2,
$$
while $II$ is nonnegative by another application of the Cauchy--Schwarz
inequality. 
The assertion about $I$ follows by recognizing that
$$
\sum_s t_s \langle \mathcal K,J(\ve_s)z \rangle = \sum_s \langle t,\ve_s
\rangle \langle [z,\mathcal K],\ve_s \rangle = \langle t,[z,\mathcal K]\rangle
= \langle \mathcal K,J(t)z \rangle.
$$
In conclusion, we have shown that the integral in \eqref{M4} is nonnegative,
under the stated assumptions on $p$ and $q = - \alpha$. The factor of $q^3$
then ensures that the $p$-Laplacian takes on the appropriate sign (based on
whether $p<Q$ or $p>Q$). Note that $q$ is strictly negative in case 1) of the
theorem, while $q$ is strictly positive in case 2) of the theorem. In case 3)
($p=Q$) the argument is identical, except that the overall factor of $q$ in
\eqref{first-deriv} and \eqref{second-deriv} should be dropped. After that,
setting $q=0$ and repeating the argument yields the desired conclusion.
This completes the proof of the theorem.

\end{proof}

In closing, we mention that Theorem \ref{T:main} can be generalized to the case of Riesz potentials, 
\[
R_\alpha(\mu) = \int_\G \frac{d\mu(g')}{N(g^{-1} g')^{Q-\alpha}},
\]
of rather general Radon measures $\mu$.
If we consider a Radon measure $\mu$ on $\G$, then, similarly to what has been done in \cite{LM}, we can show that Theorem \ref{T:main} continues to be valid for the Riesz potential  $R_{Q-\alpha}(\mu)$, provided that $\mu$ fulfills the growth assumption
\[
\int_{N(g)\ge 1} \frac{d\mu(g)}{N(g)^{\alpha}} < \infty.
\]
We omit the details.


\end{document}